\documentclass[11pt, a4paper]{amsart}
\usepackage{amscd, amsfonts, amsmath, amsrefs, amsthm, amssymb, booktabs, cases, color, comment, dsfont, graphics, graphicx, hyperref, latexsym, longtable, mathptmx, mathtools, microtype, tikz, tikz-cd, times} 

\setlength{\parindent}{2em}
\setlength{\parskip}{1.5em}

\textheight=8.21in
\textwidth=6.25in
\oddsidemargin=.15in
\evensidemargin=.15in
\topmargin=0in

\newcommand{\bb}{\mathbb}

\newcommand{\Z}{\mathbb Z}
\newcommand{\Q}{\mathbb Q}

\newcommand{\C}{\mathbb C}
\newcommand{\F}{\mathbb F}
\newcommand{\G}{\mathbb G}
\newcommand{\cL}{\mathcal L}
\newcommand{\Fp}{\mathbb F_p}
\newcommand{\Fq}{\mathbb F_q}

\newcommand{\kln}{\textrm{Kl}_n}
\newcommand{\smlm}{\sum\limits_}

\newcommand{\Qlb}{\overline{\mathbb Q}_{\ell}}
\theoremstyle{plain}
\newtheorem{theorem}{Theorem}%[section]

\newtheorem{proposition}[theorem]{Proposition}

\theoremstyle{definition}

\newtheorem{remark}[theorem]{Remark}

\theoremstyle{definition}

\newcommand{\abs}[1]{\lvert#1\rvert}
\newtheorem{defn}[theorem]{Definition}

\begin{document}
\title[On equidistribution of Gauss sums of cuspidal representations of 
$GL_d(\F_q)$]{On equidistribution of Gauss sums of \\cuspidal representations of 
$GL_d(\F_q)$}

\author{Sameer Kulkarni}

\address{Tata Institute of Fundamental  Research, Homi Bhabha Road,
	Bombay - 400 005, INDIA.}  
\email{sameer@math.tifr.res.in}

\author{C.~S.~Rajan}

\address{Tata Institute of Fundamental  Research, Homi Bhabha Road,
	Bombay - 400 005, INDIA.}  \email{rajan@math.tifr.res.in}

\subjclass{Primary 11T23; Secondary 20G05}

\begin{abstract} 
We investigate the distribution of the angles of Gauss sums attached to the cuspidal representations of general linear groups over finite fields. In particular we show that they happen to be equidistributed w.r.t.the Haar measure. However, for representations of $PGL_2(\bb F_q)$, they are clustered around $1$ and $-1$ for odd $p$ and around $1$ for $p=2$.
\end{abstract}

\maketitle
Let $p$ be a prime number, $q$ a power of $p$ and $\F_q$ a finite field with $q$ elements. Fix a non-trivial additive character $\psi_p: \F_p\to \C^*$, and let $\psi_q$  the character on $\F_q$ given by composition with trace. Given a non-trivial  multiplicative character $\chi: \F_q^*\to \C^*$, the Gauss sum, 
\[g(\chi, \psi) =\sum_{a\in \F_q^*} \chi(a)\psi_q(a),\]
has absolute value $\sqrt{q}$. The following equidistribution theorem for the angles of Gauss sums (\cite[1.3.3]{Katz_sommes_exponentielles}) is a consequence  of Deligne's bound for Kloosterman sums obtained from his work on Weil conjectures: 

\begin{theorem}\label{theorem:Deligne}
As $q$ tends to infinity, the set of $(q-2)$ points $\{g(\chi, \psi_q)/\sqrt{q}: \chi ~\mbox{non-trivial}\}$ is equidistributed with respect to the normalized Haar measure $\frac{1}{2 \pi}dx$ on the unit circle $S^1$, i.e., for any continuous function $f:S^1 \to \C^*$, 
\[ \frac{1}{2\pi} \int_{S^1} f(x)dx=\lim_{q\to \infty} \frac{1}{q-2}\sum_{\chi\neq 1} f\left( g(\chi, \psi_q)/\sqrt{q}\right).\]
\end{theorem}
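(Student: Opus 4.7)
The plan is to apply the Weyl equidistribution criterion on the unit circle: the empirical probability measures $\mu_q := \frac{1}{q-2}\sum_{\chi\neq 1}\delta_{g(\chi,\psi_q)/\sqrt{q}}$ converge weakly to normalized Haar measure on $S^1$ if and only if, for every fixed nonzero integer $n$,
\[
S_n(q) := \frac{1}{q-2}\sum_{\chi\neq 1}\left(\frac{g(\chi,\psi_q)}{\sqrt{q}}\right)^{\!n} \longrightarrow 0 \quad \text{as } q\to\infty.
\]
Since trigonometric polynomials are dense in $C(S^1)$, this reduces the theorem to a moment estimate. The case $n<0$ follows from $n>0$ via complex conjugation, using $\overline{g(\chi,\psi_q)} = \chi(-1)\,g(\chi^{-1},\psi_q)$ and the fact that $\chi\mapsto\chi^{-1}$ permutes the non-trivial multiplicative characters.

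Next I would open up the $n$-th power and swap summations:
\[
\sum_{\chi\neq 1} g(\chi,\psi_q)^n \;=\; \sum_{a_1,\ldots,a_n\in\F_q^*}\!\!\psi_q(a_1+\cdots+a_n)\sum_{\chi\neq 1}\chi(a_1\cdots a_n).
\]
Orthogonality of characters gives $\sum_{\chi}\chi(b)=(q-1)\mathbf{1}_{b=1}$ for $b\in\F_q^*$, so subtracting the trivial character the inner sum equals $(q-1)\mathbf{1}_{a_1\cdots a_n=1}-1$. Using $\sum_{a\in\F_q^*}\psi_q(a)=-1$, this yields the clean identity
\[
\sum_{\chi\neq 1} g(\chi,\psi_q)^n \;=\; (q-1)\,\kln(1;\psi_q) \;-\; (-1)^n,
\]
where $\kln(1;\psi_q) := \sum_{a_1\cdots a_n=1}\psi_q(a_1+\cdots+a_n)$ is the hyper-Kloosterman sum in $n$ variables specialized at $1$.

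The final (and only substantive) step is Deligne's bound from the proof of the Weil conjectures, which gives $|\kln(1;\psi_q)|\leq n\,q^{(n-1)/2}$. Dividing the displayed identity by $(q-2)\,q^{n/2}$,
\[
|S_n(q)| \;\leq\; \frac{q-1}{q-2}\cdot\frac{n}{\sqrt{q}} \;+\; \frac{1}{(q-2)\,q^{n/2}} \;\longrightarrow\; 0,
\]
which completes the verification of Weyl's criterion.

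The genuine obstacle here is entirely contained in Deligne's bound for hyper-Kloosterman sums; once that deep input is taken as a black box, the remaining argument is pure character orthogonality together with the Weyl criterion, as above. The possible nuisance is bookkeeping between the moment estimate for positive $n$ and its counterpart for negative $n$, but the conjugation symmetry of Gauss sums dispatches this cleanly.
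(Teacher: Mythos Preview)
Your proposal is correct and follows exactly the approach the paper uses. The paper does not give a separate proof of this particular statement (it is cited from Katz), but its proof of the generalization in Theorem~\ref{theorem:main-charvers} is line-for-line the same argument you wrote, carried out over $\F_{q^d}$ instead of $\F_q$: Weyl's criterion, the identity expressing $g(\chi,\psi)^n$ as a Fourier transform of Kloosterman sums, orthogonality to isolate $\kln(1,q)$, and then Deligne's bound. One cosmetic remark: your reduction to $n>0$ is slightly over-engineered, since $S_{-n}(q)=\overline{S_n(q)}$ immediately from $|g(\chi,\psi_q)/\sqrt{q}|=1$, and this avoids having to track the sign $\chi(-1)$.
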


In this article we will study the equidistribution properties of the angles of Gauss sums attached to irreducible cuspidal representations of general linear groups over finite fields. For a natural number $d$, denote by $\psi_d$ (or just $\psi$ by abuse of notation), the additive character of the ring of $d \times d$ matrices $M_d(\F_q)$ defined by $\psi_d=\psi\circ \mbox{Tr}$. Given a complex representation $\rho$ of $GL_d(\F_q)$ of degree $N$, the matrix valued Gauss sum was introduced by Lamprecht (\cite{Lamprecht}): 
\begin{equation} \label{GSmatrix}
 G(\rho, \psi):= \sum_{x \in GL_d(\F_q)} \rho(x)\psi(\mbox{Tr}(x)) \in M_N(\C). 
\end{equation}
Suppose $\rho$ is irreducible. Then by Schur's lemma $G(\rho, \psi)$ is a scalar matrix:
\begin{equation}\label{GS}
 G(\rho, \psi)= g(\rho, \psi)I_N,
\end{equation}
where $g(\rho, \psi) \in \C$ and $I_N$ is the $N\times N$ identity matrix.

The irreducible characters of $GL_d(\F_q)$ were classified by Green \cite{Green_s_paper} in terms of conjugacy classes. Using Green's work, Kondo showed
\[
    |g(\rho,\psi)|=q^{\tfrac{d^2-\kappa(\rho)}{2}},
\]
where $\kappa(\rho)$ is the multiplicity of $1$ as a root of the characteristic polynomial attached to the conjugacy class of $GL_d(\mathbb F_q)$ corresponding to the irreducible representation $\rho$. Let $a(\rho):= q^{-\tfrac{d^2-\kappa(\rho)}{2}}g(\rho,\psi)$ denote the `angle', i.e,  the normalized Gauss sum of absolute value $1$ attached to $\rho$. 

We recall that an irreducible representation $(\rho, V)$ of $GL_d(\F_q)$ is said to be cuspidal, if the space of $N(\F_q)$-invariants $V^{N(\F_q)}$ is zero,  where $N$ is the unipotent radical of a parabolic subgroup $P$ of  $GL_d$ defined over $\F_q$. The cuspidal representations are the building blocks for all representations of $GL_d(\F_q)$, in the sense that any irreducible representation of $GL_d(\F_q)$ occurs in a representation parabolically induced from a cuspidal representation of the Levi component of a suitable parabolic. Via the Green correspondence, cuspidal representations correspond to elliptic conjugacy classes in $GL_d(\F_q)$, i.e., conjugacy classes of semisimple elements of 
$GL_d(\F_q)$ whose characteristic polynomial is irreducible over $\F_q$. For such representations, $\kappa(\rho)=0$.  We show an analogue of Deligne's theorem for the angles of the Gauss sums corresponding to cuspidal representations of $GL_d(\F_q)$:
\begin{theorem}\label{theorem:main}
Let $R_0(d, q)$ denote the set of isomorphism classes of irreducible, cuspidal representations of $GL_d(\F_q)$. The set of normalized Gauss sums $\{q^{-d^2/2}g(\rho, \psi)\mid \rho\in R_0(d,q)\}$ is equidistributed with respect to the normalized Haar measure on $S^1$, as $q$ tends to infinity. 
\end{theorem}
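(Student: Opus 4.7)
The plan is to reduce Theorem~\ref{theorem:main} to the classical equidistribution (Theorem~\ref{theorem:Deligne}) applied to the larger field $\F_{q^d}$, via the Green/Deligne--Lusztig parametrization of cuspidal representations. Recall that $R_0(d,q)$ is in bijection with the set of Galois orbits of \emph{regular} characters $\theta\colon \F_{q^d}^* \to \C^*$, where regular means $\theta^{q^i}\neq \theta$ for $0<i<d$; write $\rho_\theta$ for the cuspidal representation attached to $\theta$. Each regular orbit has exactly $d$ elements, so $|R_0(d,q)| = q^d/d + O(q^{d/2})$.

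The key input is Kondo's explicit formula, which identifies the scalar Gauss sum of $\rho_\theta$ with the classical Gauss sum of $\theta$:
\[
 g(\rho_\theta, \psi) \;=\; (-1)^{d-1}\, q^{\binom{d}{2}}\, g(\theta, \psi_{q^d}), \qquad \psi_{q^d} := \psi \circ \mathrm{Tr}_{\F_{q^d}/\F_q}.
\]
Dividing by $q^{d^2/2}$ gives
\[
 a(\rho_\theta) \;=\; q^{-d^2/2}\, g(\rho_\theta, \psi) \;=\; (-1)^{d-1}\, q^{-d/2}\, g(\theta, \psi_{q^d}).
\]
Since $\psi_{q^d}$ is Galois-invariant and Galois acts on $\F_{q^d}^*$ by bijections, $g(\theta,\psi_{q^d})$ depends only on the orbit of $\theta$, so the right-hand side descends to a well-defined function on $R_0(d,q)$, consistent with the left-hand side.

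Now apply Theorem~\ref{theorem:Deligne} over the field $\F_{q^d}$: as $q\to\infty$ (so also $q^d\to\infty$), the $(q^d-2)$-element set $\{q^{-d/2} g(\theta, \psi_{q^d}) : \theta\neq 1\}$ is equidistributed on $S^1$ with respect to Haar measure. Two transitions are needed to pass to the cuspidal statement. \emph{(i) From all non-trivial characters to regular characters.} The non-regular characters are exactly those that factor through a norm map $\F_{q^d}^* \to \F_{q^{d'}}^*$ for some proper divisor $d'$ of $d$; there are at most $\sum_{d'\mid d,\, d'<d} q^{d'} = O(q^{d/2})$ of them, negligible compared to the total $q^d-1$. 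By the Weyl criterion, discarding an $O(q^{-d/2})$ fraction of points does not affect the weak-$*$ limit of the empirical measure. \emph{(ii) From regular characters to Galois orbits.} Since $g(\theta, \psi_{q^d})$ is constant on each orbit and every regular orbit has size exactly $d$, the empirical measure on $R_0(d,q)$ coincides with that on regular characters of $\F_{q^d}^*$. Finally, multiplication by $(-1)^{d-1}$ is a rotation of $S^1$, which preserves Haar measure, so equidistribution is preserved.

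The main obstacle is the precise form of Kondo's formula; once that identity is in hand (with the correct sign and the power $q^{\binom{d}{2}}$ matching Kondo's absolute value $|g(\rho,\psi)| = q^{d^2/2}$ with the classical $|g(\theta, \psi_{q^d})| = q^{d/2}$), the theorem reduces cleanly to Deligne's theorem over $\F_{q^d}$, up to the elementary estimate for the small set of non-regular characters.
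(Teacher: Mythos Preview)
Your proposal is correct and follows the same reduction as the paper (Kondo's formula plus the count of non-primitive characters), with one streamlining: you invoke Theorem~\ref{theorem:Deligne} over $\F_{q^d}$ as a black box, whereas the paper, in proving Theorem~\ref{theorem:main-charvers}, reproves that statement explicitly by expanding $g(\chi,\psi)^n$ into Kloosterman sums via Equation~(\ref{equation:GaussandKlooseterman}), summing over all $\chi$ by orthogonality, and then applying Deligne's bound (Theorem~\ref{Deligne_bound}) to $Kl_n(1,q^d)$. Your shortcut is perfectly legitimate and cleaner for Theorem~\ref{theorem:main} on its own; the paper presumably spells out the Kloosterman computation because the same template is needed for Theorems~\ref{theorem:centralchar-charvers} and~\ref{theorem:evenodd}, where the character sum runs over a proper subgroup of $\widehat{\F_{q^d}^*}$ and Theorem~\ref{theorem:Deligne} no longer applies off the shelf. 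One cosmetic discrepancy: the paper's version of Kondo's identity (Theorem~\ref{theorem:KBK}) carries no sign $(-1)^{d-1}$; as you already observe, this is immaterial since rotation by $\pm 1$ preserves Haar measure.
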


We now consider equidistribution results for irreducible representations of $GL_d(\F_q)$ with trivial central character. Such representations can be considered as representations of $PGL_d(\F_q)$. 
\begin{theorem}\label{theorem:centralchar}
Let $R_0^0(d, q)$ denote the set of isomorphism classes of irreducible, cuspidal representations of $GL_d(\F_q)$ with trivial central character. As $q$ tends to infinity, the set of normalized Gauss sums $\{q^{-d^2/2}g(\rho, \psi)\mid \rho\in R_0^0(d,q)\}$ is equidistributed with respect to the normalized Haar measure on $S^1$ for $d\geq 3$.

When $d=2$ and $p$ is odd, these normalized Gauss sums are equidistributed,  as $q$ tends to infinity, with respect to the measure 
\[\frac{1}{2}(\delta_1+\delta_{-1}),\] 
where $\delta_a$ denotes the Dirac measure supported at $a\in S^1$.  

When $d=2, ~p=2$, these normalized Gauss sums are equidistributed,  as $q$ tends to infinity with respect to the Dirac measure $\delta_1$ supported at $1$. \end{theorem}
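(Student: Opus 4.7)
My approach is to reduce to the Gauss sums of multiplicative characters of $\F_{q^d}$ via the Kondo--Macdonald identity, and then treat the cases $d\ge 3$ and $d=2$ by different methods. By Green's parametrization, the cuspidal representations $\pi_\theta$ of $GL_d(\F_q)$ correspond to Galois orbits of characters $\theta\colon\F_{q^d}^*\to\C^*$ in general position, and Kondo's formula gives
\[g(\pi_\theta,\psi)=\pm\,q^{d(d-1)/2}\,g(\theta,\psi_{q^d}),\qquad \psi_{q^d}=\psi\circ\mathrm{Tr}_{\F_{q^d}/\F_q},\]
with the sign depending on convention. The central character of $\pi_\theta$ is $\theta|_{\F_q^*}$, so $\pi_\theta\in R_0^0(d,q)$ precisely when $\theta$ factors through $\F_{q^d}^*/\F_q^*$; characters in the same Galois orbit produce the same Gauss sum. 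After normalization the angle becomes $a(\pi_\theta)=\pm\,q^{-d/2}\,g(\theta,\psi_{q^d})$.

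\textbf{Case $d\ge 3$.} I will apply Weyl's criterion, so it suffices to show that for each nonzero integer $n$ the normalized moment $|R_0^0(d,q)|^{-1}\sum_\rho a(\rho)^n$ tends to $0$. Non-general-position characters number $O(q^{d/2})$ and so contribute negligibly after normalization; hence one may sum over all $\theta$ of $\F_{q^d}^*/\F_q^*$. By orthogonality of the character group of $\F_{q^d}^*/\F_q^*$ (of order $(q^d-1)/(q-1)$),
\[\sum_{\theta|_{\F_q^*}=1} g(\theta,\psi_{q^d})^n = \frac{q^d-1}{q-1}\sum_{t\in\F_q^*}\mathrm{Kl}_n(t;\F_{q^d}),\]
where $\mathrm{Kl}_n(t;\F_{q^d})=\sum_{a_1\cdots a_n=t,\,a_i\in\F_{q^d}^*}\psi_{q^d}(a_1+\cdots+a_n)$. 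Deligne's bound $|\mathrm{Kl}_n(t;\F_{q^d})|\le n\,q^{d(n-1)/2}$ yields a total of size $O\bigl(n(q^d-1)q^{d(n-1)/2}\bigr)$; dividing by $q^{nd/2}$ and by $|R_0^0|\sim q^{d-1}/d$ produces a bound of order $q^{1-d/2}$, which vanishes for $d\ge 3$.

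\textbf{Case $d=2$.} Here the moment bound is only $O(1)$, so an explicit evaluation is needed. Grouping $g(\theta,\psi_{q^2})$ by cosets of $\F_q^*$ (on which $\theta$ is constant) and using $\sum_{c\in\F_q^*}\psi_{q^2}(ca)=q\cdot\mathbf{1}_{\mathrm{Tr}(a)=0}-1$ together with $\sum_{[a]\in\F_{q^2}^*/\F_q^*}\theta(a)=0$ for nontrivial $\theta$, one obtains
\[g(\theta,\psi_{q^2})=q\,\theta(a_0),\]
where $a_0$ is any nonzero element of $\ker\mathrm{Tr}_{\F_{q^2}/\F_q}$ (a single $\F_q^*$-orbit). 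For odd $p$, $a_0\notin\F_q^*$ but $a_0^2\in\F_q^*$, so $\theta(a_0)\in\{\pm 1\}$. Identifying $\F_{q^2}^*/\F_q^*\cong\Z/(q+1)\Z$, the class of $a_0$ is the unique element of order~$2$, so writing $\theta$ as the $m$-th character gives $\theta(a_0)=(-1)^m$; counting general-position characters (those with $m\neq 0,(q+1)/2$) modulo the Galois involution $m\leftrightarrow q+1-m$ shows both parities occur with asymptotic density $1/2$, yielding equidistribution against $\tfrac12(\delta_1+\delta_{-1})$. For $p=2$, however, $\ker\mathrm{Tr}=\F_q$, hence $a_0\in\F_q^*$ and $\theta(a_0)=1$ for every admissible $\theta$; the normalized angle takes a single fixed value and the limit measure is a single Dirac mass.

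\textbf{Main obstacle.} The $d\ge 3$ case is a fairly direct moment calculation backed by Deligne's bound. The delicate portion is $d=2$: one must extract the clean identity $g(\theta,\psi_{q^2})=q\,\theta(a_0)$ from a slightly subtle coset calculation, and in the odd $p$ subcase verify the asymptotic balance of the two parities of $m$ once the trivial and order-$2$ characters have been excised, the only subtlety being a small $O(1)$ discrepancy dictated by the residue of $q$ modulo $4$, which is what ultimately forces the balanced measure $\tfrac12(\delta_1+\delta_{-1})$ rather than a lopsided one.
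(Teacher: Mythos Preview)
Your reduction to abelian Gauss sums and your treatment of the case $d\ge 3$ coincide with the paper's: both use Weyl's criterion, orthogonality over $C^0(d,q)$ to pick out $a\in\F_q^*$, and Deligne's bound $|\mathrm{Kl}_n(a,q^d)|\le nq^{d(n-1)/2}$ to obtain a moment of size $O(q^{1-d/2})$.

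For $d=2$ you take a genuinely different, and more elementary, route than the paper. The paper never evaluates the individual Gauss sums; instead it computes all $n$-th moments at once by introducing the averaged Kloosterman sums
\[
I_n=\sum_{a\in\F_q^*}\mathrm{Kl}_n(a,q^2),\qquad A_n=\sum_{\substack{a\in\F_{q^2}^*\\ \mathrm{Tr}(a)=0}}\mathrm{Kl}_n(a,q^2),
\]
proves the recursion $I_n=qA_{n-1}+(-1)^n$, $A_n=qI_{n-1}+(-1)^n$ (and $I_n=qI_{n-1}+(-1)^n$ when $p=2$), and solves it to get the limiting moments $1$ and $(-1)^n$. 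Your coset computation instead yields the exact closed formula $g(\theta,\psi_{q^2})=q\,\theta(a_0)$ for every nontrivial $\theta\in C^0$, so the normalized angles lie \emph{exactly} in $\{\pm1\}$ (resp.\ equal $1$ when $p=2$), and equidistribution reduces to the parity count you describe. This is sharper and shorter than the paper's argument; it also immediately yields the finer statement (Theorem~\ref{theorem:rep-square}) identifying the square/non-square dichotomy with the value $\theta(a_0)=\pm1$, which the paper only obtains after the full moment computation. The paper's approach, on the other hand, stays methodologically uniform with the $d\ge3$ case and exhibits the exact $n$-th moments over $C^{0,s}$ and $C^{0,ns}$ rather than just their limits. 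One minor point: the Kondo--Braverman--Kazhdan formula used in the paper (Theorem~\ref{theorem:KBK}) carries no sign, so the ``$\pm$'' in your reduction step is unnecessary.
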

The answer for $d=2$ and $p$ odd, suggests that the family of cuspidal representations of $PGL(2,\F_q)$ should split into two families, such that the normalized Gauss sums belonging to these subsets should be equidistributed with respect to $\delta_1$ and $\delta_{-1}$ respectively. The cuspidal representations of  $GL(2,\F_q)$ with trivial central character are parametrized by pairs of characters 
\[\{\chi,\chi^{\sigma}\} \quad\mbox{of $\F_{q^2}^*$, such that} \quad \chi\neq \chi^{\sigma} 
\quad \mbox{and}  \quad \chi|_{\F_q^*}=1,\]
where $\sigma$ is the non-trivial element of $\mbox{Gal}(\F_{q^2}/\F_q)$. Denote by $\rho_{\chi}$ the representation corresponding to $\chi$ as above. The collection of characters $\chi$ of  $\F_{q^2}^*$ whose restriction to  $\F_{q}^*$ is trivial is a cyclic group of order $q+1$. Of this, only the trivial and the quadratic characters are equivalent to their own Galois conjugates. The natural guess regarding the above expectation turns out to be valid:
\begin{theorem} \label{theorem:rep-square}
Let $p$ be an odd prime. Let $C_{pr}^0(2,q)$ denote the collection of primitive characters of $\F_{q^2}^*$ with respect to $\F_q$, whose restriction to $\F_q^*$ is trivial. As $q$ tends to infinity, the set of normalized Gauss sums $\{g(\rho_{\chi}, \psi)/q^2\}$ where $\chi\in C_{pr}^0(2,q)$ is a square of a character of  $\F_{q^2}^*$ (resp. non-square )  is equidistributed with respect to the Dirac measure $\delta_1$ (resp. $\delta_{-1}$) supported at $1$ (resp. $-1$) of $S^1$.  
\end{theorem}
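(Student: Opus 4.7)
The plan is to evaluate $g(\rho_\chi,\psi)/q^2$ explicitly for $\chi\in C_{pr}^0(2,q)$, to show that it takes only the values $\pm 1$, and to match the sign with the squareness condition on $\chi$. Because the normalized Gauss sums are constants on each of the two subsets, the theorem is actually sharper than equidistribution: for every odd $q$, the empirical measure already equals the claimed Dirac measure. In particular, no Deligne-type input is needed, in contrast to Theorem~\ref{theorem:main}; the proof is a closed-form computation.

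The first step reduces the matrix Gauss sum to a classical abelian one over $\F_{q^2}$. By Kondo's result that $G(\rho_\chi,\psi)$ is a scalar matrix of size $q-1$, we have $(q-1)g(\rho_\chi,\psi)=\sum_{x\in GL_2(\F_q)}\chi_{\rho_\chi}(x)\psi(\mathrm{Tr}(x))$. Inserting the character formula for a cuspidal representation of $GL_2(\F_q)$ (namely $(q-1)\chi(z)$ on the center, $0$ on split regular semisimple elements, $-\chi(a)$ on unipotents, and $-(\chi(x)+\chi(x^q))$ on the elliptic torus) and summing over conjugacy classes with their respective sizes, the central, unipotent, and ``constant'' parts of the elliptic contribution telescope, leaving
\[
g(\rho_\chi,\psi)=-q\,g(\chi,\psi_{q^2}),\qquad \psi_{q^2}:=\psi\circ\mathrm{Tr}_{\F_{q^2}/\F_p}.
\]

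The second step is the explicit evaluation of $g(\chi,\psi_{q^2})$ for $\chi$ nontrivial and trivial on $\F_q^*$. Partitioning $\F_{q^2}^*$ into the $q+1$ cosets of $\F_q^*$ and using the $\F_q^*$-invariance of $\chi$, the inner sum $\sum_{c\in\F_q^*}\psi_{q^2}(c\bar x)=\sum_{c\in\F_q^*}\psi_q(c\,\mathrm{Tr}_{\F_{q^2}/\F_q}(\bar x))$ equals $q-1$ on the unique coset of trace-zero elements and $-1$ on each of the other $q$ cosets. Combining with $\sum_{x\in\F_{q^2}^*}\chi(x)=0$ yields
\[
g(\chi,\psi_{q^2})=q\,\chi(\alpha)
\]
for any fixed $\alpha\in\F_{q^2}^*$ with $\mathrm{Tr}_{\F_{q^2}/\F_q}(\alpha)=0$. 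Since one may choose $\alpha=\sqrt{\beta}$ for $\beta\in\F_q^*$ a nonsquare, the triviality of $\chi$ on $\F_q^*$ forces $\chi(\alpha)^2=\chi(\alpha^2)=\chi(\beta)=1$, so $\chi(\alpha)\in\{\pm 1\}$ and $g(\rho_\chi,\psi)/q^2=-\chi(\alpha)\in\{\pm 1\}$.

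Finally, I would identify the sign with the squareness condition. Fix a generator $\chi_0$ of the cyclic group $\widehat{\F_{q^2}^*/\F_q^*}$ of order $q+1$ and write $\chi=\chi_0^k$. A direct calculation shows that $\alpha$ has order exactly $2(q-1)$ in $\F_{q^2}^*$, so $\chi_0(\alpha)$ is a primitive second root of unity and $\chi_0^k(\alpha)=(-1)^k$. Thus $\chi(\alpha)$ depends only on the parity of $k$, and this parity distinguishes precisely the squares from the non-squares in $\widehat{\F_{q^2}^*/\F_q^*}$; this is the intended meaning of ``$\chi$ is a square of a character of $\F_{q^2}^*$''. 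Excluding the two non-primitive characters ($k=0$ and $k=(q+1)/2$) leaves $q-1$ primitive characters split between the two parity classes, on each of which the normalized Gauss sum is a single constant in $\{\pm 1\}$, and the claimed equidistribution with respect to $\delta_{\pm 1}$ is then immediate. The delicate point is the sign bookkeeping: the precise value $\pm 1$ produced depends on the convention used for the character formula of cuspidal representations (Deligne--Lusztig versus Green/Kondo), and matching this sign to the statement of the theorem requires fixing the convention at the outset before the computation yields the correct Dirac measures.
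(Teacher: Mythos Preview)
Your approach is correct and genuinely different from the paper's. The key step---the closed-form evaluation
\[
g(\chi,\psi_{q^2})=q\,\chi(\alpha)\qquad(\chi\in C^0,\ \chi\neq 1,\ \mathrm{Tr}_{\F_{q^2}/\F_q}(\alpha)=0)
\]
via the coset decomposition of $\F_{q^2}^*$ modulo $\F_q^*$---is valid, and your identification of $\chi(\alpha)=(-1)^k$ with the parity of $k$ in $\chi=\chi_0^k$ (i.e.\ squareness in $C^0\cong\widehat{\F_{q^2}^*/\F_q^*}$) is exactly the content of the paper's equations~(\ref{eqn:cosorth}) and~(\ref{eqn:consvalue}). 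The paper, by contrast, never writes down this exact value: it proves Theorem~\ref{theorem:evenodd} by computing the $n$-th moments over $C^{0,s}$ and $C^{0,ns}$ through the Kloosterman averages $I_n,A_n$, establishes the recursion $I_n=qA_{n-1}+(-1)^n$, $A_n=qI_{n-1}+(-1)^n$, solves it, and shows the moments tend to $1$ and $(-1)^n$. Your argument is shorter, needs no Kloosterman sums or recursion, and yields a strictly stronger statement: for every primitive $\chi\in C^0$ and every $q$, the normalized Gauss sum is \emph{exactly} $\pm 1$, so the empirical measure already equals the Dirac measure. This in fact contradicts the paper's Remark following the proof of Theorem~\ref{theorem:evenodd}, which asserts that the normalized sums are not identically $\pm 1$; that remark appears to stem from the inclusion of the trivial character (and an arithmetic slip), and your computation shows it is incorrect once one restricts to $\chi\neq 1$.

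Two points deserve tightening. First, your reading of ``square of a character of $\F_{q^2}^*$'' as ``square in $C^0$'' is the correct one (and matches the paper's $C^{0,s}$), but it is worth noting explicitly that, since $q-1$ is even, every element of $C^0$ is literally a square in $\widehat{\F_{q^2}^*}$; the theorem would be vacuous under the literal reading. Second, the sign you obtain from the character-table computation, $g(\rho_\chi,\psi)=-q\,g(\chi,\psi_{q^2})$, is indeed what the standard $GL_2$ character formula produces, but combined with $g(\chi,\psi_{q^2})=q\chi(\alpha)$ it assigns squares to $-1$ and non-squares to $+1$, the reverse of the theorem as stated. The paper's own reduction via Theorem~\ref{theorem:KBK} carries no sign, giving the opposite assignment. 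You flag this as ``sign bookkeeping depending on convention,'' but since the whole content of the theorem is which family lands at which point, this is not a convention to be deferred: you should either reconcile your first step with Theorem~\ref{theorem:KBK} or state clearly that the labelling in Theorem~\ref{theorem:rep-square} is reversed.
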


\begin{remark}
One reason to study Gauss sums is their relation to $L$-functions. Specifically, Gauss sums $g(\chi,\psi)$ have absolute value $q^\frac{1}{2}$, and they satisfy the Hasse-Davenport relation: $-g(\chi \circ N_{\bb F_{q^n}^*/ \bb F^*_q}, \psi\circ Tr_{\bb F_{q^n}/ \bb F_q})=(-g(\chi,\psi))^n$. The first one says that the $L$-function of $\bb F_q[T]$ satisfies the Riemann hypothesis, while the second relation follows from the Euler product expansion (\cite{Ireland_Rosen}).

The Sato-Tate type of conjectures in automorphic forms predict for a tempered cusp form $\pi$ on a connected, reductive algebraic group $G$ defined over a number field $K$, that the Langlands-Satake parameters attached to an unramified component $\pi_v$ at finite places $v$ of $K$, should be equidistributed with respect to the projection of the Haar measure of $M$ onto its set of conjugacy classes, where $M$ is a maximal compact subgroup of the connected component of identity of the Langlands dual $^LG$ over $\C$. This led us to consider  the equidistribution question for cuspidal representations with trivial central character. Such representations can be considered as representations of the adjoint group $PGL_d(\F_q)=GL_d(\F_q)/\F_q^*$. The identity component of the Langlands dual group of $PGL_d$ is $SL_d$, whereas that of $GL_d$ is $GL_d$ itself. 

However, this reasoning does not seem to suggest the above answer.  We refer also to Remark \ref{remark_epsilon_factor} for a connection with epsilon factors. 
\end{remark}

\section{Preliminaries and reduction to abelian Gauss sums} In this section we recall relevant results about equidistribution, representation theory of $GL_d(\F_q)$, Gauss sums, and reduce the statement of the theorems to one involving the classical abelian Gauss sums. 
\subsection{Equidistribution} Let $X$ be a compact, Hausdorff  space, and $\mu$ a normalized Borel measure on $X$. A sequence $S_N$ of subsets of $X$ is said to be {\em equidistibuted} in $X$ with respect to $\mu$ if, for any continuous function $f$ on $X$, 
\begin{equation}
    \lim_{N \longrightarrow \infty} \frac{1}{\abs{S_N}} \sum\limits_{x \in S_N} f(x)=\int_{X}f d\mu.
\end{equation}
Suppose $\{f_i\}_{i\in I}$ is a family of continuous functions whose linear spans are dense in $C(X)$, the space of continous functions on $X$ with respect to the supremum norm. Assume further that the limits, 
\[\lim_{N \longrightarrow \infty} \frac{1}{\abs{S_N}} \sum\limits_{x \in S_N} f_i(x),\]
exists for all $i\in I$. Then there exists an unique measure $\mu$ on $X$ such that the sequence $S_N$ is equidistributed with respect to $\mu$ (\cite[Appendix, Chapter 1]{Serre_book}). Taking $X=S^1$, it suffices to work with the functions $z\mapsto z^n$ for $n\in \Z$. Hence, to check that the sequence of subsets $S_N$ of $S^1$ is equidistributed with respect to the normalized Haar measure on $S^1$, it suffices to show the following: 
\begin{equation} \label{equation:eqdisHaar}
   \lim_{N \longrightarrow \infty} \frac{1}{\abs{S_N}}\sum\limits_{z \in S_N} z^n = 0\quad  \mbox{for}~ n \neq 0, ~n\in \Z.
\end{equation}
The sequence $S_N$ is equidistributed with respect to the Dirac measure supported at $1$ (resp. $-1$) provided 
\begin{equation}
   \lim_{N \longrightarrow \infty} \frac{1}{\abs{S_N}}\sum\limits_{z \in S_N} z^n = 1\quad \mbox{(resp. $(-1)^n$)} \quad  \mbox{for $n\in \Z$}.
\end{equation}

If we further know that the sets $S_N$ are closed under complex conjugation, we need to verify the foregoing equations only for $n\geq 0$. 

\subsection{Kloosterman sums and Deligne's bound.}
Let $\psi$ be a nontrivial additive character of $\Fq$ into $\C ^*$. For $a \in \Fq$, the \textit{Kloosterman sum} is defined to be the sum
\begin{equation}
    \kln(a,q):=\smlm{\substack{x_i \in \Fq\\ x_1 \cdots x_n=a}} \psi(x_1+\ldots + x_n)
\end{equation}
Kloosterman sums are related to Gauss sums in the following way:
\begin{equation}\label{equation:GaussandKlooseterman}
    g(\chi,\psi)^n=\smlm{a \in {\Fq}^*} \chi(a) \kln(a,q).
\end{equation}
That is, powers of Gauss sums are the Fourier transform (over the group $\bb F_q^*$) of Kloosterman sums. The absolute values of Gauss sums is known classically: $\abs{g(\chi,\psi)}=\sqrt{q}$ if $\chi \neq \mathbf 1$ and $g(\mathbf {1}, \psi)=-1$. Hence by Parseval identity, we have the equality
\[
    \smlm{a} \abs{\kln(a,q)}^2=q^n \pm \frac{1}{q-1}.
\]
From this we can conclude crudely that $\abs{\kln(a)} =O(q^{n/2})$, and that $\abs{\kln(a)}^2 \leq \tfrac{q^n}{q-1} \pm \tfrac{1}{(q-1)^2}$ for at least one $a$. 

Deligne has obtained a uniform bound of $O(q^{\tfrac{n-1}{2}})$ for all $a$ as a consequence of his work on Weil conjectures: 
 
\begin{theorem}\label{Deligne_bound}\cite[Equation 7.1.3]{SGA_4_half}
If $a \in \bb F_q^*$, then
\begin{equation}
    \abs{\kln(a,q)} \leq nq^{\frac{n-1}{2}}.
\end{equation}
\end{theorem}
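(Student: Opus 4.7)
The plan is to realize $\kln(a,q)$ as the trace of geometric Frobenius on the $\ell$-adic cohomology of an algebraic variety over $\Fq$, and then appeal to Deligne's purity theorem (Weil II). Consider the affine hypersurface $V_a \subset \gm^n$ cut out by $x_1 \cdots x_n = a$, together with the rank-one Artin-Schreier sheaf $\cL_\psi$ pulled back along $f(x_1,\ldots,x_n) = x_1 + \cdots + x_n$. The Grothendieck--Lefschetz trace formula then reads
\[
\kln(a,q) \;=\; \sum_i (-1)^i \operatorname{Tr}\!\bigl(\operatorname{Frob}_q \,\bigm|\, H^i_c(V_{a,\overline{\F}_q},\, f^*\cL_\psi)\bigr).
\]

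The first key step is to show that the cohomology is concentrated in the middle degree $i = n-1$ and that $\dim H^{n-1}_c = n$. The cleanest path is to package the family $a \mapsto \kln(a,q)$ into the Kloosterman sheaf on $\gm/\Fq$, built recursively as an iterated $!$-convolution of $\cL_\psi$ with itself (equivalently, as the pushforward under the multiplication map $\gm^n \to \gm$ of $f^*\cL_\psi$). The required vanishings follow from the fact that $\cL_\psi$ on $\ga$ has compactly supported cohomology of dimension $1$ concentrated in degree $1$, combined with the projection formula and induction on $n$; the rank computation $\dim H^{n-1}_c = n$ is then immediate from the same induction.

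The second key step is purity. Since $\cL_\psi$ is pure of weight $0$, so is $f^*\cL_\psi$; because $V_a$ is smooth affine of dimension $n-1$, Deligne's main theorem from Weil II yields that every eigenvalue of $\operatorname{Frob}_q$ on $H^{n-1}_c(V_{a,\overline{\F}_q},\, f^*\cL_\psi)$ has complex absolute value at most $q^{(n-1)/2}$. Combining the two steps gives
\[
|\kln(a,q)| \;\leq\; \dim H^{n-1}_c \cdot q^{(n-1)/2} \;=\; n\, q^{(n-1)/2}.
\]

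The main obstacle is unquestionably the purity input, which rests on the full strength of Deligne's proof of the Weil conjectures and is well beyond anything that could be reproved self-contained here. The cohomological setup of the first step, though elementary in spirit, also requires genuine care to verify the middle-degree concentration cleanly; for this reason one simply invokes \cite{SGA_4_half} directly, as is standard.
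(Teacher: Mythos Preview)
The paper does not give a proof of this statement at all: Theorem~\ref{Deligne_bound} is quoted directly from \cite[7.1.3]{SGA_4_half} and used throughout as a black box. Your proposal, by contrast, sketches the actual argument behind Deligne's bound --- the Grothendieck--Lefschetz trace formula applied to the Artin--Schreier sheaf on the hypersurface $x_1\cdots x_n=a$, the concentration of cohomology in middle degree $n-1$ with dimension $n$ (via the Kloosterman sheaf as an iterated $!$-convolution of $\cL_\psi$), and the weight bound from Weil~II. That outline is correct and is precisely the route taken in \cite{SGA_4_half}; you have simply supplied what the paper chose to cite rather than reprove. For the purposes of this article no proof is expected here, so your final sentence (that one invokes \cite{SGA_4_half} directly) is exactly what the authors do.
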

This saving of an order of $\tfrac{1}{2}$ is optimal and crucial in the computations that follow.

\subsection{Deligne-Lusztig representations} The classification of the  irreducible characters of $GL_d(\Fq)$ was carried out by Green (\cite{Green_s_paper}). More generally, the classification of irreducible representations of finite groups of Lie type was carried out by Deligne and Lusztig (\cite{Del_Lusztig_main_paper}) using geometric methods. In this article, we will use the Deligne-Luztig parametrization of irreducible representations. 

Choose a prime $\ell$ different from $p$. We work over the field $\Qlb$ instead of $\C$. 
The maximal tori of $GL_d$ over $\Fq$ up to $GL_d(\Fq)$ are classified in terms of conjugacy classes of the Weyl group $W\simeq S_d$ of $GL_d$. For $w\in S_d$, let ${\mathbb T}_w$ be the associated tori, where for the identity element in $S_d$, we associate the diagonal torus. Let $T_w= {\mathbb T}_w(\Fq)$. Associated to a character $\chi: T_w\to \Qlb^*$, Deligne and Lusztig construct a virtual representation $R^{\chi}_w$ (or $R^{\chi}_{T_{w}}$) of $GL_d(\Fq)$, and showed that every irreducible representation $\rho$ of $ GL_d(\Fq)$ is a constituent of some (not necessarily unique) $R^\chi_w$. 

\subsection{Cuspidal representations}
\begin{defn}
For a reductive group $\bb G$ defined over $\bb F_q$ with Frobenius $F$, we say that an irreducible representation $G=\bb G^F$ is \textbf{cuspidal}, if for every proper parabolic subgroup $\bb P$ of $\bb G$ with unipotent radical $\bb U$, the set of fixed vectors $\rho^{\bb U(\Fp)}$ is $0$.
\end{defn}
A theorem due to Harish Chandra says that cuspidal representations are the building blocks of all representations of $G$, in other words, the non cuspidal irreducible representations are induced from cuspidal representations with respect to suitable proper parabolic subgroups \cite{Bhama_book}.

Let $E$ be a finite extension of $\F_q$. A character $\chi:\F_q^*\to \C^*$ is said to be {\em primitive} if it does not factor via the norm map $E^*\to E'^*$ for any subextension $\F_q\subset E'\subset E$. Equivalently, for any non-trivial  $\sigma\in \mbox{Gal}(E/\F_q)$, the conjugate  $\chi^{\sigma}$ is not equal to the trivial character. Define two characters to be {\em equivalent} if they are equal up to a Galois twist. 

The Deligne-Lusztig parametrization of cuspidal representations of $GL_d(\F_q)$ is as follows: let $w_0\in S_d$ be a $d$-cycle, an element of longest length with respect to the standard generators for $S_d$.  The tori attached to $w_0$ corresponds to an embedding of the Weil restriction of scalars 
$R_{\F_{q^d}/\F_q}(\G_m)$ embedded inside $GL_d$. There is an isomorphism $T_{w_0}\simeq \F_{q^d}^*$. Such tori are not contained in any proper parabolic subgroup. 

Given a primitive character $\chi$ of $\F_{q^d}^*$, the Deligne-Lusztig representation $\varepsilon_{\bb G} \varepsilon_{\bb T_{w_0}} R^{\chi}_{w_0}$  is an irreducible, cuspidal representation, where $\varepsilon_{\bb G}:=(-1)^{\bb F_q-\textrm{rank of}\,\, \bb G}$. For $GL_d$, we have $\varepsilon_{GL_{d}}=(-1)^d$ and for the torus ${\bb T}_{w_0}$, the $\bb F_q$-rank is $-1$, hence $\varepsilon_{\bb T_{w_0}}=-1$, hence $\varepsilon_{\bb G} \varepsilon_{\bb T_{w_0}} R^{\chi}_{w_0}=(-1)^{d+1}R^{\chi}_{w_0}$. Further, if the characters $\chi$ and $\chi'$ are not conjugate under the action of ${\rm Gal}(\F_q^d/\F_q)$, the irreducible representations $(-1)^{d+1}R^{\chi}_{w_0}$ and $(-1)^{d+1}R^{\chi'}_{w_0}$ are distinct. This sets up a bijective correspondence between equivalence classes of primitive characters of $\F_{q^d}^*$ and isomorphism classes of cuspidal representations of $GL_d(\F_q)$. \cite[Chapter 6]{Bhama_book}

\subsection{Central character}
Let $\rho$ be an irreducible representation of a group $G$ whose centre is $Z$. By Schur's Lemma $\rho(z)$ is a scalar matrix for all $z \in Z$. That is, there exists a character $\chi: Z \to \C^*$ such that $\rho(z)=\chi(z)\cdot \textrm{Id}$. The character $\chi$ is called the central character of $\rho$.

Suppose $\bb T$ is a maximal $\F_q$-torus of $\bb G=GL_n$ (more generally of any reductive group $G$). Let $\bb B=\bb T \bb U$ be a Borel subgroup of $\bb G$. Denote by $\cL :\bb G \to \bb G$ the Lang isogeny $x\mapsto x^{-1}F(x)$, where $F$ is the Frobenius morphism. On the space $\cL^{-1}(U)$, the product $\bb G(\F_q)\times \bb T(\F_q)$ acts by $(g,t)(x)=gxt$. Let $Z$ be the center of $\bb G$. Restricted to $Z(\F_q)$, the induced action is same as the action of $Z(\F_q)\subset \bb T(\F_q)$. 

Thus, given a character $\chi$ of $T=\bb T(\F_q)$, the action of $Z(\F_q)$ on the $\chi$-isotypical component of $\ell$-adic  cohomology groups with compact support $H_c^i(\cL^{-1}(U), \bar{\Q}_{\ell})$ is given by the character $\chi$. 
Since the Deligne-Lusztig representations $R_T^{\chi}$ are afforded on the space 
\[H_c^*(\cL^{-1}(U), \bar{\Q}_{\ell})_{\chi}=\sum_{i=0}^{2 \textrm{dim}(U)} (-1)^iH_c^i(\cL^{-1}(U), \bar{\Q}_{\ell})_{\chi},\]
it follows that the center $Z(\F_q)$ acts via the character $\chi$ in the representation $R_T^{\chi}$.

\subsection{$GL_d$-Gauss sums} 
Suppose $\bb T$ is a maximal $\F_q$ torus in $GL_d$. Given a character $\chi: \bb T(\F_q)\to \bar{\Q}_{\ell}^*$, define the (abelian) Gauss sum, 
\begin{equation} 
 g(\chi, \psi):= \sum_{x \in \bb T(\F_q)} \chi(x)\psi(\mbox{Tr}(x)), 
\end{equation}
where $\mbox{Tr}$ denotes the trace of $x$ considered as a matrix element. 
It was shown by Kondo (\cite{Kondo_original_paper}) using Green's work and by 
Braverman and Kazhdan (\cite{BraKa}) using the theory of character sheaves of Lusztig, that the Gauss sums defined as in  Equations (\ref{GSmatrix}, \ref{GS}) for an irreducible representation $\rho$ of $GL_d(\Fq)$ is essentially the abelian Gauss sum. 
\begin{theorem}[Kondo, Braverman-Kazhdan]\label{theorem:KBK}\cite[Theorem 1.3]{BraKa}
Suppose $\rho$ is an irreducible representation of  $GL_d(\Fq)$ that is an irreducible constituent of $R_T^{\chi}$ for some maximal $\F_q$ torus $T$ in $GL_d$. Then, 
\[g(\rho, \psi)=q^{\tfrac{d^2-d}{2}}
g(\chi,\psi).\]
\end{theorem}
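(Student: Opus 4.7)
The plan is to compute the matrix Gauss sum by pairing the class function $\psi\circ\mathrm{Tr}$ against the Deligne--Lusztig virtual character $R_T^\chi$ and then transferring the scalar $g(-,\psi)$ from $R_T^\chi$ to its constituent $\rho$. Since $\psi\circ\mathrm{Tr}$ is a class function, the value $g(\pi,\psi)=\frac{1}{\dim\pi}\sum_x\chi_\pi(x)\psi(\mathrm{Tr}(x))$ is determined by the coefficient of $\chi_\pi$ in the expansion of $\psi\circ\mathrm{Tr}$ into irreducibles, so once one proves the identity for $R_T^\chi$ it descends to every irreducible constituent, provided the scalar $g(-,\psi)$ takes a common value on the constituents of $R_T^\chi$.

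The decisive elementary input is that the matrix trace depends only on the semisimple part of the Jordan decomposition: in an eigenbasis for $x_s$, the commuting unipotent $x_u$ is block-upper-triangular with $1$'s on the diagonal, so $\mathrm{Tr}(x)=\mathrm{Tr}(x_s)$. Using this, I would group
\[
\sum_{x\in G^F}R_T^\chi(x)\psi(\mathrm{Tr}(x))=\sum_{[s]}[G^F\!:\!C_G(s)^F]\,\psi(\mathrm{Tr}(s))\!\sum_{u\in(C_G(s)^F)_{\mathrm{unip}}}\!R_T^\chi(su),
\]
then plug in the Deligne--Lusztig character formula for $R_T^\chi(su)$ and interchange the $u$-sum with the sum over $h\in G^F$ satisfying $h^{-1}sh\in T^F$. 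The resulting inner sum $\sum_u Q^{C_G(s)}_{hTh^{-1}}(u)$ of a Green function over unipotents in $C_G(s)^F$ is the key combinatorial quantity, which in the cuspidal case simplifies dramatically. Reindexing the remaining $h$-sum by $t=h^{-1}sh\in T^F$ and exploiting that $\mathrm{Tr}$ is Weyl-invariant, the Weyl-orbit multiplicities cancel and one recovers a copy of $\sum_{t\in T^F}\chi(t)\psi(\mathrm{Tr}(t))=g(\chi,\psi)$; matching prefactors against $\dim R_T^\chi=\epsilon_G\epsilon_T|G^F|_{p'}/|T^F|$ produces the claimed $q^{(d^2-d)/2}$.

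The hard part is the Green-function sum over unipotents, which, unlike full character orthogonality over $G^F$, does not collapse by a simple closed identity and must be handled by careful class-by-class bookkeeping --- with primitivity of $\chi$ used to kill the contributions from semisimple elements lying in proper subfields of $T^F$. A slicker path, taken by Braverman--Kazhdan, replaces this combinatorics by a geometric argument: view $\psi\circ\mathrm{Tr}$ as a function on the matrix algebra $M_d(\F_q)$, study the induced ``Gauss-sum sheaf'' on the character-sheaf side, and exploit that this sheaf acts by a single scalar on the perverse sheaf categorifying the entire geometric conjugacy class of $(T,\chi)$. Either route, the cuspidal regime used for the equidistribution theorems of this paper is the cleanest special case: $\pm R_T^\chi$ is already irreducible for primitive $\chi$ on the Coxeter torus, general-position semisimple elements of $T^F$ have centralizer equal to $T$ so the Green-function step reduces to $Q_T^T(1)=1$, and the elementary computation above suffices.
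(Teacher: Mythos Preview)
The paper does not supply its own proof of this theorem: it is quoted as a known result due to Kondo and to Braverman--Kazhdan, with a citation to \cite[Theorem~1.3]{BraKa}, and is then used as a black box in Section~\ref{section:reduction} to reduce the $GL_d$-Gauss sum to the abelian one. There is therefore no argument in the paper against which to compare your proposal.

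That said, your sketch is a fair outline of the Kondo-type combinatorial approach, and your remark that Braverman--Kazhdan replace the Green-function bookkeeping by a character-sheaf argument is accurate. One point deserves care in the general (non-cuspidal) setting: computing $\sum_{x}R_T^\chi(x)\,\psi(\mathrm{Tr}(x))$ yields only the weighted combination $\sum_{\rho} m_\rho\,(\dim\rho)\,g(\rho,\psi)$ over the irreducible constituents $\rho$ of $R_T^\chi$, so knowing this total does not by itself force each individual $g(\rho,\psi)$ to equal the common value $q^{(d^2-d)/2}g(\chi,\psi)$. One needs an independent reason---Braverman--Kazhdan's argument that the Gauss-sum operator acts by a single scalar on an entire Lusztig series, or Kondo's direct representation-by-representation computation via Green's parametrization---to pass from the virtual character to each irreducible piece. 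You correctly observe, however, that for the only case the present paper actually uses (primitive $\chi$ on the Coxeter torus $T_{w_0}$, where $(-1)^{d+1}R_{T_{w_0}}^\chi$ is already irreducible) this issue evaporates and the elementary calculation you describe is enough.
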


\subsection{Reduction to the abelian case} \label{section:reduction}
The cuspidal representatios $\rho$ are of the form $\pm R^{\chi}_{w_0}$ for some primitive character $\chi$ of $\F_{q^d}^*$. Via the correspondence between cuspidal representations  and primitive characters, the $d$ inequivalent Galois conjugates of $\chi$, give rise to isomorphic cuspidal representations. Further, by Theorem \ref{theorem:KBK},
\[g(\rho, \psi) =q^{(d^2-d)/2} g(\chi, \psi). \]
Thus, questions about equidistribution of the normalized Gauss sums  $\{g(\rho, \psi)/q^{d^2/2}\mid \rho\in R_0(d,q)\}$ are reduced to questions about  equidistribution of $\{g(\chi, \psi)/q^{d/2}\}$, where $\chi$ runs over the primitive characters of $\F_{q^d}^*$.  Theorems \ref{theorem:main}, \ref{theorem:centralchar} and \ref{theorem:rep-square} are consequences of the following theorems:

\begin{theorem}\label{theorem:main-charvers}
Let $C_{pr}(d,q)$ denote the collection of primitive characters of $\F_{q^d}^*$ with respect to $\F_q$. The set of normalized Gauss sums $\{g(\chi, \psi)q^{-d/2}\mid \rho\in C_{pr}(d,q)\}$ is equidistributed with respect to the normalized Haar measure on $S^1$, as $q$ tends to infinity. 
\end{theorem}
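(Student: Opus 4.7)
The plan is to apply the Weyl criterion in the form~(\ref{equation:eqdisHaar}): it suffices to show, for every fixed integer $n \geq 1$, that
\[
M_n(q) := \frac{1}{|C_{pr}(d,q)|\, q^{nd/2}} \sum_{\chi \in C_{pr}(d,q)} g(\chi,\psi)^n \longrightarrow 0 \quad \text{as } q \to \infty,
\]
since the $(-n)$-th moment is the complex conjugate of the $n$-th and is therefore handled simultaneously.

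The first step is to expand the $n$-th power via the Fourier-Kloosterman identity~(\ref{equation:GaussandKlooseterman}) applied to the field $\F_{q^d}$,
\[
g(\chi,\psi)^n = \sum_{a \in \F_{q^d}^*} \chi(a)\, \kln(a, q^d),
\]
and interchange the order of summation, so that the inner sum becomes $\sum_{\chi \in C_{pr}(d,q)} \chi(a)$. A character of $\F_{q^d}^*$ fails to be primitive with respect to $\F_q$ precisely when it factors through some norm $N_{d/e}\colon \F_{q^d}^* \to \F_{q^e}^*$ with $e$ a proper divisor of $d$; by Möbius inversion on the divisor lattice,
\[
\sum_{\chi \in C_{pr}(d,q)} \chi(a) = \sum_{e \mid d} \mu(d/e)\,(q^e - 1)\, \mathbf{1}_{N_{d/e}(a) = 1}.
\]
Specializing $a=1$ gives the asymptotic $|C_{pr}(d,q)| = q^d + O_d(q^{d/2})$.

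The second step applies Deligne's uniform bound (Theorem~\ref{Deligne_bound}), $|\kln(a,q^d)| \leq n\, q^{d(n-1)/2}$. Since the fibre $\{a \in \F_{q^d}^* : N_{d/e}(a)=1\}$ has cardinality $(q^d-1)/(q^e-1)$, the contribution of each divisor $e \mid d$ to $\sum_\chi g(\chi,\psi)^n$ is bounded by
\[
|\mu(d/e)|\,(q^e - 1)\cdot\frac{q^d - 1}{q^e - 1}\cdot n\,q^{d(n-1)/2} \;=\; O_{d,n}\bigl(q^{d(n+1)/2}\bigr).
\]
Summing over the (boundedly many) divisors of $d$ and dividing by $|C_{pr}(d,q)|\,q^{nd/2} \sim q^{(n+2)d/2}$, one obtains $M_n(q) = O_{d,n}(q^{-d/2}) \to 0$, as required.

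The decisive ingredient is Deligne's square-root cancellation on $n$-variable Kloosterman sums; the trivial Weil-type bound $|\kln(a,q^d)| = O(q^{dn/2})$ would only match the normalization and give no decay, whereas Deligne's extra factor of $q^{-d/2}$ is exactly what is needed to beat the family $C_{pr}(d,q)$ of size $\sim q^d$. Apart from this deep input, the remaining combinatorics is routine. One incidental feature is that each primitive character lies in a Galois orbit of size $d$ whose members share the same Gauss sum (as the trace is Galois-invariant), so the $d$-fold overcounting cancels symmetrically between numerator and denominator and has no effect on the limit.
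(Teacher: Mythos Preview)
Your proof is correct and follows the same overall strategy as the paper: Weyl's criterion, the Gauss--Kloosterman identity~(\ref{equation:GaussandKlooseterman}), interchange of summation, and Deligne's bound (Theorem~\ref{Deligne_bound}). The only tactical difference lies in how the restriction to primitive characters is handled. The paper observes that there are at most $O_d(q^{d/2})$ non-primitive characters, each contributing a normalized Gauss sum of modulus at most $1$, so their total effect on the moment is $O(q^{-d/2})$; this allows the sum to run over \emph{all} characters of $\F_{q^d}^*$, where plain orthogonality collapses everything to the single value $\kln(1,q^d)$. You instead stay inside $C_{pr}(d,q)$ and evaluate $\sum_{\chi\in C_{pr}}\chi(a)$ exactly by M\"obius inversion on the divisor lattice, obtaining a finite linear combination of Kloosterman averages over norm-one fibres, each of which you then bound by Deligne. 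The paper's route is marginally shorter (one Kloosterman value instead of $\tau(d)$ of them), while yours has the mild advantage of giving an explicit closed form for $\sum_{\chi\in C_{pr}}\chi(a)$; both yield the same $O_{d,n}(q^{-d/2})$ rate.
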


\begin{theorem}\label{theorem:centralchar-charvers}
Let $C_{pr}^0(d,q)$ denote the collection of primitive characters of $\F_{q^d}^*$ with respect to $\F_q$, whose restriction to $\F_q^*$ is trivial. Suppose $d\geq 3$. As $q$ tends to infinity, the set of normalized Gauss sums $\{g(\chi, \psi)q^{-d/2}\mid \rho\in C_{pr}^0(d,q)\}$ is equidistributed with respect to the normalized Haar measure on $S^1$. 
\end{theorem}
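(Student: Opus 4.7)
The plan is to verify the Weyl equidistribution criterion (\ref{equation:eqdisHaar}). Since the map $\chi \mapsto \chi^{-1}$ preserves both primitivity and triviality on $\F_q^*$, the set $\{g(\chi,\psi)q^{-d/2} : \chi \in C_{pr}^0(d,q)\}$ is closed under complex conjugation, so it suffices to prove that for every fixed integer $n \geq 1$,
\[
\frac{1}{|C_{pr}^0(d,q)|} \sum_{\chi \in C_{pr}^0(d,q)} g(\chi,\psi)^n \, q^{-dn/2} \longrightarrow 0 \quad \text{as } q \to \infty.
\]

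First I would enlarge the sum from $C_{pr}^0(d,q)$ to the full group $C^0(d,q) := \widehat{\F_{q^d}^*/\F_q^*}$ of characters trivial on $\F_q^*$. A non-primitive character in $C^0(d,q)$ factors through the norm map to some intermediate subfield $\F_{q^{d'}}$ with $d' \mid d$ and $d' < d$; since each such factorization yields at most $q^{d'} - 1$ characters and the largest such $d'$ equals $d/p_{\min} \leq d/2$ (with $p_{\min}$ the smallest prime divisor of $d$), the non-primitive subset has size $O(q^{d/2})$. Using $|g(\chi,\psi)| \leq q^{d/2}$ and $|C_{pr}^0(d,q)| \sim (q^d-1)/(q-1) \sim q^{d-1}$, this discrepancy contributes $O(q^{1-d/2})$ to the normalized moment, which is $o(1)$ for $d \geq 3$.

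For the moment over the full group $C^0(d,q)$, I would expand each power via the identity (\ref{equation:GaussandKlooseterman}) applied over $\F_{q^d}$, namely $g(\chi,\psi)^n = \sum_{a \in \F_{q^d}^*} \chi(a)\,\kln(a,q^d)$, interchange the two summations, and invoke orthogonality of characters of $\F_{q^d}^*/\F_q^*$: the inner sum $\sum_{\chi \in C^0(d,q)} \chi(a)$ equals $(q^d-1)/(q-1)$ when $a \in \F_q^*$ and vanishes otherwise. This collapses the double sum to
\[
\sum_{\chi \in C^0(d,q)} g(\chi,\psi)^n = \frac{q^d-1}{q-1} \sum_{a \in \F_q^*} \kln(a, q^d).
\]
Applying Deligne's bound $|\kln(a,q^d)| \leq n\, q^{d(n-1)/2}$ (Theorem \ref{Deligne_bound}) and dividing by $q^{dn/2} |C_{pr}^0(d,q)| \sim q^{dn/2}\cdot q^{d-1}$ yields a bound of order $O(n\, q^{1-d/2})$, which tends to $0$ precisely when $d \geq 3$.

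The main obstacle---and the origin of the hypothesis $d \geq 3$---is the loss of a factor of $q-1$ relative to the proof of Theorem \ref{theorem:main-charvers}. There, orthogonality over all of $\widehat{\F_{q^d}^*}$ localizes the Kloosterman sum to the single point $a = 1$; here, orthogonality over $\widehat{\F_{q^d}^*/\F_q^*}$ localizes instead to the $q-1$ elements of $\F_q^*$. This extra factor of $q$ in the estimate is exactly what makes the bound insufficient at $d = 2$, where the analysis genuinely requires finer tools (as carried out in Theorems \ref{theorem:centralchar} and \ref{theorem:rep-square}).
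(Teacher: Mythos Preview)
Your proposal is correct and follows essentially the same route as the paper: enlarge the moment sum from $C_{pr}^0(d,q)$ to all of $C^0(d,q)$ using the $O(q^{d/2})$ bound on non-primitive characters, apply (\ref{equation:GaussandKlooseterman}) and orthogonality over $\widehat{\F_{q^d}^*/\F_q^*}$ to collapse to $\sum_{a\in\F_q^*}\kln(a,q^d)$, and then invoke Deligne's bound to obtain $O(nq^{1-d/2})$. Your closing remark on the origin of the hypothesis $d\geq 3$ also mirrors the paper's discussion.
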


\begin{theorem} \label{theorem:evenodd}
Let $p$ be an odd prime. Let $C_{pr}^0(2,q)$ denote the collection of primitive characters of $\F_{q^2}^*$ with respect to $\F_q$, whose restriction to $\F_q^*$ is trivial. As $q$ tends to infinity, the set of normalized Gauss sums $\{g(\chi, \psi)/q\}$ where $\chi$ is a square of a character of  $\F_{q^2}^*$ (resp. non-square )  is equidistributed with respect to the Dirac measure $\delta_1$ (resp. $\delta_{-1}$) supported at $1$ (resp. $-1$) of $S^1$.  

When $p=2$, the set of normalized Gauss sums $\{g(\chi, \psi)/q\mid \chi \in C_{pr}^0(2,q) \}$ is equidistributed with respect to the Dirac measure $\delta_1$ supported at $1$ as $q\to \infty$. 
\end{theorem}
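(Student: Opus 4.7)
The plan is to derive an \emph{exact} closed form for $g(\chi,\psi_{q^2})$ whenever $\chi\in C_{pr}^0(2,q)$, which turns the equidistribution statement into a pointwise computation and sidesteps any Kloosterman-type input or Deligne's bound. The essential structural feature is that $\chi|_{\F_q^*}=1$ together with $\psi_{q^2}=\psi_q\circ \mathrm{Tr}_{\F_{q^2}/\F_q}$ forces the Gauss sum to collapse along the $\F_q^*$-orbits in $\F_{q^2}^*$.

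First I would partition $\F_{q^2}^*=\bigsqcup_{[z]\in \F_{q^2}^*/\F_q^*} z\F_q^*$ and, using $\chi|_{\F_q^*}=1$ together with the $\F_q$-linearity of the trace, rewrite
\[
g(\chi,\psi_{q^2}) \;=\; \sum_{[z]}\chi([z])\sum_{t\in \F_q^*}\psi_q(t\,\mathrm{Tr}(z)).
\]
The inner sum equals $q-1$ if $\mathrm{Tr}(z)=0$ and $-1$ otherwise. Since $\ker(\mathrm{Tr})\subset \F_{q^2}$ is a one-dimensional $\F_q$-subspace, its nonzero elements form a single orbit class $[z_0]$ in $\F_{q^2}^*/\F_q^*$. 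Isolating this term and using orthogonality of characters on the cyclic group $\F_{q^2}^*/\F_q^*$, which is legitimate because primitivity forces $\chi$ to be non-trivial on the quotient, I expect to arrive at the clean identity
\[
g(\chi,\psi_{q^2}) \;=\; q\,\chi([z_0]) \qquad \text{for every } \chi\in C_{pr}^0(2,q).
\]

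Next I would read off $\chi([z_0])$ case by case. In characteristic $2$, $\mathrm{Tr}(z)=z+z^q=0$ forces $z\in\F_q$, so $[z_0]$ is the identity of $\F_{q^2}^*/\F_q^*$ and $g(\chi,\psi)/q=1$ pointwise on $C_{pr}^0(2,q)$; equidistribution against $\delta_1$ is then automatic. For $p$ odd, $z_0^q=-z_0$ yields $z_0^2=-\mathrm{Nm}(z_0)\in \F_q^*$ while $z_0\notin \F_q^*$, so $[z_0]$ has order exactly $2$ in the cyclic group $\F_{q^2}^*/\F_q^*$ of even order $q+1$. Hence $\chi([z_0])\in\{\pm 1\}$, equal to $+1$ iff $\chi$ is trivial on the unique order-$2$ subgroup of the quotient, i.e.\ iff $\chi$ lies in the index-$2$ subgroup of squares of $\widehat{\F_{q^2}^*/\F_q^*}$.

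The main obstacle I anticipate is not analytic but interpretive: one must match the condition ``$\chi$ is a square of a character of $\F_{q^2}^*$'' in Theorem~\ref{theorem:evenodd} with membership in $(\widehat{\F_{q^2}^*/\F_q^*})^{2}$ produced by the computation. I would handle this via the canonical self-pairing of the cyclic group $\F_{q^2}^*/\F_q^*$ of order $q+1$, observing that for $\chi|_{\F_q^*}=1$ the two natural readings of the squareness condition coincide since the kernel of squaring on $\widehat{\F_{q^2}^*}$ is absorbed by the restriction-triviality constraint on $\F_q^*$. Once this identification is in place, the normalized Gauss sum is literally $+1$ for a primitive square and $-1$ for a primitive non-square, each class having exactly $(q-1)/2$ members, and equidistribution with respect to $\delta_{\pm 1}$ holds without any asymptotics.
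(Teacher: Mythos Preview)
Your core computation is correct and yields a cleaner, stronger conclusion than the paper's own proof: the identity $g(\chi,\psi_{q^2})=q\,\chi([z_0])$ for every nontrivial $\chi\in C^0$ is exact, so the normalized Gauss sums are \emph{literally} $\pm 1$ (respectively $1$ in characteristic $2$), not merely equidistributed toward a Dirac mass. The paper instead expresses the $n$-th moments through the Kloosterman averages $I_n,A_n$, establishes the recursion $I_n=qA_{n-1}+(-1)^n$, $A_n=qI_{n-1}+(-1)^n$, solves it, and extracts the limit asymptotically; your $\F_q^*$-orbit decomposition bypasses this entirely. Your formula even explains the paper's closing remark that the Gauss sums ``are not identically equal to $1$ or $-1$'': that is an artifact of including the trivial character in the moment sums, not genuine dispersion.

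There is, however, a real gap in your interpretive step. You assert that the two readings of ``$\chi$ is a square'' coincide because ``the kernel of squaring on $\widehat{\F_{q^2}^*}$ is absorbed by the restriction-triviality constraint on $\F_q^*$.'' This is false. The kernel of squaring on $\widehat{\F_{q^2}^*}$ is $\{1,\omega\}$ with $\omega$ the quadratic character of $\F_{q^2}^*$, and $\omega$ already lies in $C^0$ (every element of $\F_q^*$ is a square in $\F_{q^2}^*$ when $q$ is odd), so twisting by $\omega$ does not alter the restriction to $\F_q^*$. In fact \emph{every} $\chi\in C^0$ satisfies $\chi(-1)=1$ and is therefore a square in $\widehat{\F_{q^2}^*}$; under the literal reading of the theorem the non-square case would be vacuous. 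The intended hypothesis---and the one the paper's proof actually uses, via $C^{0,s}=\{\eta^2:\eta\in C^0\}$---is squareness \emph{within} $C^0\cong\widehat{\F_{q^2}^*/\F_q^*}$, and that is precisely the condition $\chi([z_0])=1$ your computation delivers. Your argument is complete once you drop the faulty reconciliation and simply record that ``square'' is to be read inside $C^0$.
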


\section{Proof of Theorem \ref{theorem:main-charvers}}
In order to prove Theorem \ref{theorem:main}, we are reduced to proving Theorem \ref{theorem:main-charvers} showing the equidistribution of $\{g(\chi, \psi)/q^{d/2}\}$, where $\chi$ runs over the primitive characters of $\F_{q^d}^*$. 
We need to show for any non-zero integer $n$ that 
\begin{equation} \label{eqn:primeqdis}
\frac{1}{|C_{pr}(d,q)|} \smlm{\chi\,\, \textrm{primitive}} \Big( \frac{g(\chi, \psi)}{q^{d/2}} \Big)^n \longrightarrow 0,
\end{equation}
as $q \to \infty$. Since $g(\chi, \psi)^{-1}=g(\bar{\chi}, \bar{\psi})/q^d$, it suffices to verify the above limits for $n\geq 1$. 

Suppose $\chi$ is a non-primitive character of $\F_{q^d}^*$.  Then $\chi$ is left invariant by a non-trivial subgroup $H$ of $\mbox{Gal}(\F_{q^d}/\F_q)$. Hence $\chi$ is of the form $\chi'\circ N_{\F_{q^d}/\F_{q^d}^H}$, for some character $\chi'$ of the fixed field $\F_{q^d}^H$ under $H$. Here for an extension of finite fields $E/F$, $N_{E/F}$ denotes the norm map from $E^*$ to $F^*$. The cardinality of $\F_{q^d}^H$ is at most $q^{d/2}$. Since there are only finitely many fields contained in the finite extension $\F_{q^d}/\F_{q}$ independent of $q$,  the number of non-primitive characters is bounded by $Cq^{d/2}$ for some constant $C$. 

Thus the number of primitive characters is at least $q^d-Cq^{d/2}$. For any non-trivial character $\chi$ of $\F_{q^d}^*$, $|g(\chi, \psi)|=q^{d/2}$ and $g(1,\psi)=-1$.  Hence, in order to show the validity of Equation (\ref{eqn:primeqdis}), we can work with the set of all characters of $\F_{q^d}^*$. 
%This follows now from Deligne's theorem, Theorem \ref{theorem:Deligne}. For the sake of %completeness of exposition, we give the proof. 
From Equation (\ref{equation:GaussandKlooseterman}) relating Gauss and Kloosterman sums, 
we are reduced to showing for $n\geq 1$, 
\[ \frac{1}{q^{d+nd/2}} \sum_{\chi\neq 1} ~~\sum_{a\in \F_{q^d}^*} \chi(a)Kl_n(a, q^d)\rightarrow 0, \]
as $q\to \infty$. Interchanging the order of summation and by orthogonality, 
\[ \frac{1}{q^{d+nd/2}} \sum_{\chi\neq 1} ~~\sum_{a\in \F_{q^d}^*} \chi(a)Kl_n(a, q^d)=\frac{(q^d-1)}{q^{d+nd/2}}Kl_n(1, q^d).\]
Theorem \ref{theorem:main} now follows by appealing to Deligne's bound given by Theorem \ref{Deligne_bound}, 
\[ |Kl_n(1, q^d)|\leq nq^{d(n-1)/2}.\]

\section{Proof of Theorem \ref{theorem:centralchar-charvers} for $d\geq 3$}
We now consider equidistribution of Gauss sums of cuspidal representations with trivial central character. Equivalently by the arguments in Section \ref{section:reduction}, we consider equidistribution of Gauss sums of primitive characters of of $\F_{q^d}^*$ that restrict trivially to $\F_q^*$. As seen in the foregoing section, the cardinality of non-primitive characters is of order $O(q^{d/2})$. The group $C^0(d,q)$ of characters of $\F_{q^d}^*$ that restrict trivially to $\F_q^*$ is of order $(q^d-1)/(q-1)$. 

Suppose $d\geq 3$. Then the ratio of the number of non-primitive characters to that of primitive characters with trivial central character goes to zero as $q$ tends to infinity. Arguing as in the proof of Theorem \ref{theorem:main-charvers} given above,  in order to prove Theorem \ref{theorem:centralchar-charvers}, it is sufficient to work with the entire character group $C^0(d,q)$ rather than restrict to just the primitive elements in $C^0(d,q)$. Thus we are reduced to showing 
for any natural number $n\geq 1$ that 
\begin{equation} \label{eqn:?}
\frac{1}{|C^{0}(d,q)|} \smlm{\chi\in C^0(d,q)} \Big( \frac{g(\chi, \psi)}{q^{d/2}} \Big)^n \longrightarrow 0,
\end{equation}
as $q \to \infty$. In terms of Kloosterman sums, we need to show 
for $n\geq 1$, that 
\begin{equation}\label{eqn:d3} 
\frac{1}{|C^0(d,q)|q^{nd/2}} \sum_{\chi\in C^0(d,q)} ~~\sum_{a\in \F_{q^d}^*} \chi(a)Kl_n(a, q^d)\rightarrow 0, 
\end{equation}
as $q\to \infty$.

Let $\widehat{\F_{q^d}^*}$ denote the group of characters of ${\F_{q^d}^*}$. Under the non-degenerate pairing, 
\begin{equation}\label{eqn:pairing}
 {\F_{q^d}^*}\times \widehat{\F_{q^d}^*}\to \C^*,
\end{equation}
the left annihilator of the group $C^0(d,q)$ is precisely $\F_q^*$. Hence, 
\[
    \sum_{\chi\in C^0(d,q)}\chi(a)=\begin{cases} |C^0(d,q)| & \text{if $a\in \F_q^*$}\\
0& \text{if $a\not\in \F_q^*$.}
\end{cases}
\]
Interchanging the order of summation in Equation (\ref{eqn:d3}), we get
\[ \frac{1}{|C^0(d,q)|q^{nd/2}} \sum_{\chi\in C^0(d,q)}~~ \sum_{a\in \F_{q^d}^*} \chi(a)Kl_n(a, q^d)=\frac{1}{q^{nd/2}}  \sum_{a\in \F_{q}^*} Kl_n(a, q^d).\]
Using Deligne's bound, 
\[\frac{1}{q^{nd/2}}  \sum_{a\in \F_{q}^*} |Kl_n(a, q^d)|\leq \frac{(q-1)nq^{d(n-1)/2}}{q^{nd/2}}=\frac{n(q-1)}{q^{d/2}}.\]
Since $d\geq 3$, the term goes to zero and this proves Theorem \ref{theorem:centralchar-charvers}. 

\section{Proof of Theorem \ref{theorem:evenodd}}
Let $C^0=C^0(2,q)$ be the subgroup of characters of $\F_{q^2}^*$  that is trivial restricted to $\F_q^*$. Its cardinality is $q+1$. Suppose $\chi$ is a character of $\F_q^*$ such that $\chi\circ N_{\F_q^2/\F_q}$ belongs to $C^0$. This is equivalent to saying that $\chi(x^2)$ is identically $1$. When $p=2$, the only  such character is the trivial character, and when $p$ is odd the condition implies that $\chi$ is quadratic. Hence the number of primitive characters of $\F_{q^2}^*$ that are trivial upon restriction to $\F_q^*$ is precisely $(q-1)$ when $p$ is odd and $q$ when $p$ is even. Since the set of non-primitive characters is at most $2$, arguing as before, in order to prove to equidistribution we can work with $C^0$ rather than just its subset of primitive characters. 

For odd $p$, let
\[\begin{split}
C^{0,s}&=\{\chi\in C^0\mid \chi=\eta^2 \quad\mbox{for some $\eta\in C^0$}\}\\
C^{0,ns}&=\{\chi\in C^0\mid \chi\neq \eta^2 \quad\mbox{for any $\eta\in C^0$}\}.
\end{split}
\]
Both sets have cardinality $(q+1)/2$ when $p$ is odd. In what follows, when $p=2$, by an abuse of notation, we will work with $C^{0,s}$-version, where we take $C^{0,s}$ to be equal to $C^0$. 
The quadratic character belongs to $C^{0,s}$ precisely when $4|(q+1)$. 

In order to prove Theorem \ref{theorem:evenodd} for $p$ odd,  we need to show that for $n\geq  1$, the average of the $n$-th moments of the Gauss sums, have the following limiting behaviour as $q\to \infty$:
\begin{align}
\frac{1}{|C^{0,s}|} \smlm{\chi\in C^{0,s}} \Big( \frac{g(\chi, \psi)}{q} \Big)^n&=\frac{2}{(q+1)q^{n}} \sum_{\chi\in C^{0,s}} ~~\sum_{a\in \F_{q^2}^*} \chi(a)Kl_n(a, q^2)\rightarrow 1, \label{eqn:cosmoment} \\
\text{and} \quad \frac{1}{|C^{0,ns}|} \smlm{\chi\in C^{0,ns}} \Big( \frac{g(\chi, \psi)}{q} \Big)^n&=\frac{2}{(q+1)q^{n}} \sum_{\chi\in C^{0,ns}} ~~\sum_{a\in \F_{q^2}^*} \chi(a)Kl_n(a, q^2)\rightarrow (-1)^n.\label{eqn:consmoment}
\end{align} 
For $p=2$, we need to show that as $q\to \infty$, 
\[ \frac{1}{q^{n+1}} \sum_{\chi\in C^{0}} ~~\sum_{a\in \F_{q^2}^*} \chi(a)Kl_n(a, q^2)\rightarrow 1. \]

The left annihilator of the subgroup $C^0$ of $\widehat{\F_{q^2}^*}$ with respect to the non-degenerate bilinear pairiing given by Equation (\ref{eqn:pairing}) is precisely $\F_q^*$. 
Let $\sigma$ be the non-trivial automorphism of $\F_{q^2}$ over $\F_q$. Define
\begin{equation*}\label{defn:S}
    \begin{split}
    S^0&=\{x\in \F_{q^2}^*\mid \sigma(x)=x\}=\F_q^*,\\
    S^{-}&=\{x\in \F_{q^2}^*\mid \sigma(x)=-x\}=\{x\in \F_{q^2}^*\mid Tr(x)=0\},\\
    S&=S^0\cup S^{-}=\{x\in \F_{q^2}^*\mid  x^2\in \F_q^*\}.
\end{split}
\end{equation*}
where $Tr$ denotes the trace from $ \F_{q^2}$ to $\F_q$. When $p=2$, $S^0=S^-$. The annihilator of the subgroup $C^{0,s}$ contains the set $S$. 
 When $p$ is odd, the cardinality of $S$ is $2(q-1)$, and hence it is precisely the annihilator of $C^{0,s}$. Consequently, for $x\in \F_{q^2}^*$, 
\begin{equation} \label{eqn:cosorth}
\sum_{\chi\in C^{0,s}}\chi(x)=\begin{cases} 
0 &  \text{if $x\not \in S$},\\
(q-1)/2 & \text{ if $x\in S$}.
\end{cases}
\end{equation}
The value of a character $\chi\in C^{0,ns}$ evaluated on an element $x\in S$ is given as, 
\begin{equation}\label{eqn:consvalue}
 \chi(x)=\begin{cases} 
1 &  \text{if $x\in S^0$},\\
-1& \text{ if $x\in S^-$}.
\end{cases}
\end{equation}
Since $C^{0,ns}$ is a coset of $C^{0,s}$ in $C^0$, equations (\ref{eqn:cosorth}) and (\ref{eqn:consvalue}) yield, 
\begin{equation} \label{eqn:consorth}
\sum_{\chi\in C^{0,ns}}\chi(x)=\begin{cases} 0 &  \text{if $x\not \in S$},\\
(q-1)/2 & \text{ if $x\in S^0$},\\
-(q-1)/2 & \text{ if $x\in S^-$}.
\end{cases}
\end{equation}
Define for $n\geq 1$, the following averages of Kloosterman sums: 
\[\begin{split}
\text{(Invariant)}\quad I_n&=\sum_{a\in \F_q^*}Kl_n(a, q^2)=
\sum_{\substack{x_1, \cdots, x_n \in \F_{q^2}^*\\
\prod_i x_i\in \F_q^*}} \psi\circ Tr(x_1+\cdots+x_n)\\
\text{(Anti-invariant)}\quad A_n&=
\sum_{\substack{a\in \F_{q^2}^*\\ Tr(a)=0}}Kl_n(a, q^2)=\sum_{a\in S^-}Kl_n(a, q^2)=\sum_{\substack{x_1, \cdots, x_n \in \F_{q^2}^*\\
Tr(\prod_i x_i)=0}}\psi\circ Tr(x_1+\cdots+x_n).
 \end{split}
 \]
Substituting the values obtained from Equations (\ref{eqn:cosorth},  \ref{eqn:consorth}) into Equations (\ref{eqn:cosmoment},  \ref{eqn:consmoment}) we need to show for $n\geq 1$ and $q\to \infty$, 
\begin{align}
\frac{1}{|C^{0,s}|} \smlm{\chi\in C^{0,s}} \Big( \frac{g(\chi, \psi)}{q} \Big)^n&=\frac{(q-1)}{(q+1)q^{n}} (I_n+A_n)\rightarrow 1, \label{eqn:cosmoment-IA} \\
\text{and} \quad \frac{1}{|C^{0,ns}|} \smlm{\chi\in C^{0,ns}} \Big( \frac{g(\chi, \psi)}{q} \Big)^n&=\frac{(q-1)}{(q+1)q^{n}} (I_n-A_n)\rightarrow (-1)^n.\label{eqn:consmoment-IA}
\end{align} 
We observe that \begin{equation}\label{eqn:I1odd}
I_1=-1\quad \text{and} \quad A_1=q-1. 
\end{equation}
Hence, 
\[ I_1+A_1= q-2\quad \text{and}\quad I_1-A_1= -q.\]
This gives us the validity of Equations (\ref{eqn:cosmoment-IA}) and (\ref{eqn:consmoment-IA}) for $n=1$. 

When $p=2$, 
\begin{equation} \label{eqn:co-orth2}
\sum_{\chi\in C^{0}}\chi(x)=\begin{cases} 0 &  \text{if $x\not \in \F_q^*$},\\
(q-1)& \text{ if $x\in \F_q^*$}.\end{cases}
\end{equation}
Thus we need to show that as $q\to \infty$, 
the sum
\begin{equation}\label{eqn:2moment-IA} 
\frac{1}{|C^{0}|} \smlm{\chi\in C^{0}} \Big( \frac{g(\chi, \psi)}{q} \Big)^n=\frac{(q-1)}{q^{n+1}} I_n\rightarrow 1.
\end{equation}
Further, % $I_n=A_n$, with $C_0$ replacing $C_{0,s}$ and 
\begin{equation}\label{eqn:I1even}
 I_1=\sum_{a\in \F_q^*}\psi\circ Tr(a)=\sum_{a\in \F_q^*}\psi(2a)=(q-1), 
\end{equation}
and this yields the validity of equation (\ref{eqn:2moment-IA}) when $n=1$.

The following key proposition gives a recurrence relation between the sums $I_n$ and $A_n$, allowing an inductive procedure to calculate their values explicitly: 

\begin{proposition}
For $n\geq 2$ and $p$ odd, the following recurrence relation holds:
\[ \begin{split}
I_n& =qA_{n-1}+(-1)^n\\
A_n& =qI_{n-1}+(-1)^n.
\end{split}
\]
When $p=2$, 
\[ I_n =qI_{n-1}+(-1)^n.\]
\end{proposition}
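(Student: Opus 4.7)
The plan is to peel off the last variable in each Kloosterman-style sum, reducing an $n$-fold sum over $(x_1,\dots,x_n)\in(\F_{q^2}^*)^n$ with a product constraint to an $(n-1)$-fold sum times an orthogonality-style inner sum. This will turn the constraint ``$\prod_{i=1}^n x_i\in\F_q^*$'' into a constraint on $\prod_{i=1}^{n-1}x_i$ lying in $S^-$, and vice versa, which is exactly the swap that underlies the recurrence.

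Concretely, write $I_n$ by parametrising the last factor: if $a=\prod_{i=1}^n x_i\in\F_q^*$, then $x_n=a/(x_1\cdots x_{n-1})$, so
\[
I_n=\sum_{x_1,\dots,x_{n-1}\in\F_{q^2}^*}\psi\!\bigl(\mathrm{Tr}(x_1+\cdots+x_{n-1})\bigr)\sum_{a\in\F_q^*}\psi\!\bigl(\mathrm{Tr}(ay)\bigr),\qquad y:=\frac{1}{x_1\cdots x_{n-1}}.
\]
Since $a\in\F_q$ commutes with $\sigma$, $\mathrm{Tr}(ay)=a\cdot\mathrm{Tr}(y)$, and standard additive orthogonality over $\F_q^*$ gives the inner sum $q-1$ when $\mathrm{Tr}(y)=0$ and $-1$ otherwise. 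For $p$ odd, $\mathrm{Tr}(y)=0$ is equivalent to $y\in S^-$, and since $S^-$ is closed under inversion this translates to $\prod_{i=1}^{n-1}x_i\in S^-$. Splitting the outer sum accordingly gives
\[
I_n=-T_{n-1}+q\,A_{n-1},\qquad T_{n-1}:=\sum_{x_1,\dots,x_{n-1}\in\F_{q^2}^*}\psi\!\bigl(\mathrm{Tr}(x_1+\cdots+x_{n-1})\bigr).
\]
By multiplicativity $T_{n-1}=\bigl(\sum_{x\in\F_{q^2}^*}\psi(\mathrm{Tr}(x))\bigr)^{n-1}=(-1)^{n-1}$, yielding $I_n=(-1)^n+qA_{n-1}$.

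For $A_n$ in odd characteristic, fix a generator $\alpha\in S^-$ so that $S^-=\alpha\F_q^*$, and repeat the same procedure with $b=\prod_{i=1}^n x_i=\alpha c$ for $c\in\F_q^*$. The inner sum becomes $\sum_{c\in\F_q^*}\psi(c\cdot\mathrm{Tr}(\alpha y))$, so the pivotal identification is $\mathrm{Tr}(\alpha y)=0\iff\alpha y\in S^-\iff y\in\F_q^*$, i.e.\ $\prod_{i=1}^{n-1}x_i\in\F_q^*$. This swaps the roles of $I$ and $A$ and produces $A_n=-T_{n-1}+qI_{n-1}=(-1)^n+qI_{n-1}$. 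Finally, for $p=2$ one has $S^-=S^0=\F_q^*$, so the $A$-sum collapses into $I$, and the same computation (no $\alpha$ needed) gives $I_n=(-1)^n+qI_{n-1}$.

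The only step requiring care is the translation of the vanishing-trace condition on $y=1/\prod_{i<n}x_i$ into a membership condition on $\prod_{i<n}x_i$ itself; once the identifications $S^-=-S^-=(S^-)^{-1}$ and $\alpha S^-=\F_q^*$ are noted, everything else is bookkeeping and the evaluation of $T_{n-1}=(-1)^{n-1}$.
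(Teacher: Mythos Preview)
Your proof is correct and follows essentially the same route as the paper: peel off the last factor, reduce to an inner additive-character sum over $\F_q^*$, apply orthogonality to get the $(q-1)/(-1)$ dichotomy, and identify the resulting trace-zero condition on $y=1/\prod_{i<n}x_i$ with membership of $\prod_{i<n}x_i$ in $S^-$ (resp.\ $\F_q^*$) using that $S^-$ is closed under inversion and $\alpha S^-=\F_q^*$. The paper phrases the same computation in terms of $b=\prod_{i<n}x_i$ and $\sqrt{\delta}$ in place of your $\alpha$, but the argument is identical.
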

\begin{proof}[Proof of Theorem \ref{theorem:evenodd}]
Inductively, it follows for $n\geq 2$, 
\begin{align*}
I_n+A_n&= q(I_{n-1}+A_{n-1})+2(-1)^n \\
&=q^{n-1}(I_1+A_1)+2(-1)^n (1-q+q^2+\cdots +(-1)^{n-2}q^{n-2})\\
&=(q-2)q^{n-1}+2(-1)^n \frac{(1+(-q)^{n-1})}{(1+q)}.
\end{align*}
Similarly, 
\[I_n-A_n= -q(I_{n-1}-A_{n-1})=(-q)^{n-1}(I_1-A_1)=(-q)^n.\]
For $p=2$ and $n\geq 2$, we have 
\begin{align*}
I_n& = qI_{n-1}+(-1)^n\\
&=q^{n-1}I_1 +(-1)^n(1-q+q^2+\cdots +(-1)^{n-2}q^{n-2})\\
&=(q-1)q^{n-1}+(-1)^n\frac{(1+(-q)^{n-1})}{(1+q)}.
\end{align*}
Together with the values  for $n=1$, it  gives us the validity of Equations (\ref{eqn:cosmoment-IA}, \ref{eqn:consmoment-IA}, \ref{eqn:2moment-IA}) for $n\geq 1$, and with it a proof of Theorem \ref{theorem:evenodd} for all $p$. 
\end{proof}
\begin{remark}
Substituting the above expressions for $I_n\pm A_n$ in Equations (\ref{eqn:cosmoment-IA}) and (\ref{eqn:consmoment-IA}), we see that the $n$-th moments are not precisely equal to the limiting Dirac measure. Hence the normalized Gauss sums, even accounting for the contribution from the trivial character (equal to $-1/q^n$) are not identically equal to $1$ or $-1$, but only tend to the respective Dirac measures as $q \to \infty$.
\end{remark}

\begin{proof}[Proof of Proposition]
Suppose $\prod_{i=1}^nx_i=a$, where by $x_i$ we consider an arbitrary element in $\F_{q^2}^*$. Let $\prod_{i=1}^{n-1}x_i=b$ and $x_n=a/b$. With this decomposition, 
\begin{align*}
I_n&= \sum_{\substack{x_1, \cdots, x_n \in \F_{q^2}^*\\
\prod_i x_i\in \F_q^*}} \psi\circ Tr(x_1+\cdots+x_n)\\
&=\sum_{a\in \F_q^*}~~\sum_{\substack{x_1\cdots x_{n-1}\in \F_{q^2}^*\\
x_1\cdots x_{n-1}=b}} \psi\circ Tr(x_1+\cdots+x_{n-1})\psi\circ Tr(a/b)\\
&=\sum_{a\in \F_q^*}~~\sum_{b\in \F_{q^2}^*} Kl_{n-1}(b, q^2)\psi\circ Tr(a/b)\\
&=\sum_{b\in \F_{q^2}^*} ~~\sum_{a\in \F_q^*}Kl_{n-1}(b, q^2)\psi\circ Tr(a/b).
\end{align*}
Since $a\in \F_q^*$,
\[\psi\circ Tr(a/b)=\psi(a/b+\sigma(a/b))=\psi(aTr(b^{-1})).\]
Suppose $Tr(b^{-1})\neq 0$. As $a$ varies over $\F_q^*$, the collection of elements of the form $aTr(b^{-1})$ forms the entire set $\F_q^*$. Hence, 
\[ \sum_{a\in \F_q^*}\psi\circ Tr(a/b)=\begin{cases} -1 & \text{if $Tr(b)\neq 0$},\\
(q-1) &\text{if $Tr(b)=0$,}
\end{cases}
\]
where we have used the fact that $Tr(b^{-1})=0$ iff $Tr(b)=0$. Substituting this in the above expression for $I_n$, we get
\begin{align*}
I_n&= \sum_{b\in \F_{q^2}^*} ~~\sum_{a\in \F_q^*}Kl_{n-1}(b, q^2)\psi\circ Tr(a/b)\\
&=(q-1)\sum_{\substack{b\in \F_{q^2}^*\\Tr(b)=0}} Kl_{n-1}(b, q^2)-\sum_{\substack{b\in \F_{q^2}^*\\Tr(b)\neq 0}}Kl_{n-1}(b, q^2)\\
&=qA_{n-1}-\sum_{b\in \F_{q^2}^*}Kl_{n-1}(b, q^2)\\
&=qA_{n-1}-\sum_{x_1, \cdots, x_{n-1}\in \F_{q^2}^*}\psi\circ Tr(x_1+\cdots+x_{n-1})\\
&=qA_{n-1}-\prod_{i=1}^{n-1}\left(\sum_{x_i\in \F_{q^2}^*}\psi\circ Tr(x_i)\right)\\
&=qA_{n-1}- (-1)^{n-1}.
\end{align*}
We observe here that the above proof holds without any change for $p=2$, where we replace $A_{n-1}$ by $I_{n-1}$. 

The proof for $A_n$ is similar. The set of elements of trace $0$ in $\F_{q^2}$ consists of elements of the form $a\sqrt{\delta}$, where $\delta$ is a non-square in $\F_q^*$ and $a\in \F_q$. 
\begin{align*}
A_n&= \sum_{\substack{x_1, \cdots, x_n \in \F_{q^2}^*\\
Tr(\prod_i x_i)=0}}\psi\circ Tr(x_1+\cdots+x_n)\\
&=\sum_{a\in \F_q^*}~~\sum_{\substack{x_1\cdots x_{n-1}\in \F_{q^2}^*\\
x_1\cdots x_{n-1}=b}} \psi\circ Tr(x_1+\cdots+x_{n-1})\psi\circ Tr(a\sqrt{\delta}/b)\\
&=(q-1)\sum_{\substack{b\in \F_{q^2}^*\\Tr(\sqrt{\delta}/b)=0}} Kl_{n-1}(b, q^2)-\sum_{\substack{b\in \F_{q^2}^*\\Tr(\sqrt{\delta}/b)\neq 0}}Kl_{n-1}(b, q^2).
\end{align*}
Now 
\[Tr(\sqrt{\delta}/b)=\sqrt{\delta}/b+\sigma(\sqrt{\delta}/b)=\sqrt{\delta}/b-\sqrt{\delta}/\sigma(b)=0\]
precisely when $b=\sigma(b)$, i.e., $b\in \F_q^*$. Hence, 
\begin{align*}
A_n&=(q-1)\sum_{b\in \F_q^*} Kl_{n-1}(b, q^2)-\sum_{b\in \F_q^*}Kl_{n-1}(b, q^2)\\
&=qI_{n-1}- (-1)^{n-1}.
\end{align*}
\end{proof}

\begin{remark}\label{remark_epsilon_factor}
We now give a connection of the hypothesis of Theorem \ref{theorem:rep-square} to $\epsilon$ factors. We thank U. K. Anandavardhanan and Dipendra Prasad for pointing out this connection. We refer to the papers of Fröhlich and Queyrut and  Deligne (\cite{Fr-Qu}, \cite{Deligne_local_const_orthogonal} ) for further details. 
Let $K$ be a local field with residue field $k\cong \mathbb F_q$. Let $L$ be the unramified quadratic extension of $K$. There is a natural projection map $L^* \xrightarrow{\cong} \mathcal{O}_L^* \times \mathbb Z \to \mathbb F_{q^2}^*$. Via this projection, $\chi$ can be considered as a character of $L^*$, which we denote by $\chi'$. Assume now that $\chi$ restricts trivially to $\mathbb{F}_q^*$. Then $\chi'$ restricts trivially to $K^* \subset L^*$.

For a local field $K$, let $W_K$ denote its Weil group. By the isomorphism $W_L^{ab} \xrightarrow{\cong} L^*$, $\chi'$ can be considered as a character of $W_L^{ab}$, and therefore of $W_L$. Since $\chi'$ restricts trivially to $K^*$, the induced representation $\textrm{Ind}^{W_K}_{W_L}(\chi')$ can be realised over $\mathbb R$. Let $V=\textrm{Ind}^{W_K}_{W_L}([\chi']-[1])$ be the induction to $W_K$ of the virtual representation $[\chi']-[1]$. Then $V$ has dimension $0$ and determinant $1$. By Fröhlich-Queyrut and Deligne, the espsilon factor of $V$ satisfies
\begin{equation*}
    \epsilon(V,\tfrac{1}{2})=\chi'(\Delta),
\end{equation*}
for any element $\Delta$ of $L^*$ whose trace to $K$ vanishes. By taking $\Delta \in L^*$ to correspond to an element $\sqrt{\delta}$ of $S^- \subset \mathbb F_{q^2}^*$ defined as in the proof of Theorem \ref{theorem:evenodd}, we see that for $\chi \in C^0_{pr}$, $\chi$ is a square if and only if $\chi(\sqrt{\delta})=1$. This holds precisely when  $\chi'(\Delta)=1$,  or equivalently when  $\epsilon(V,\tfrac{1}{2})=1$.
\end{remark}

\bibliographystyle{alpha}
%\bibliography{1equidistribution.bib}

\nocite{*}
\end{document}